\newtheorem{theorem}{Theorem}[section]
\newtheorem{proposition}[theorem]{Proposition}
\theoremstyle{definition}
\newtheorem{remark}[theorem]{Remark}
\newtheorem{example}[theorem]{Example}
\newtheorem{assumption}[theorem]{Assumption}
\numberwithin{equation}{section}
\newcommand{\comment}[1]{}
\def\fddto{\stackrel{\rm f.d.d.}{\Longrightarrow}}
\newcommand{\ind}{{\bf 1}}
\def\inddd#1{{\ind}_{\left\{#1\right\}}}
\newcommand{\proba}{\mathbb P}
\renewcommand{\P}{\mathbb P}
\newcommand{\esp}{{\mathbb E}}
\newcommand{\cov}{{\rm{Cov}}}
\newcommand{\var}{{\rm{Var}}}
\newcommand{\eqnh}{\begin{eqnarray*}}
\newcommand{\eqne}{\end{eqnarray*}}
\newcommand{\eqnhn}{\begin{eqnarray}}
\newcommand{\eqnen}{\end{eqnarray}}
\newcommand{\equh}{\begin{equation}}
\newcommand{\eque}{\end{equation}}
\def\summ#1#2#3{\sum_{#1 = #2}^{#3}}
\newcommand{\eqd}{\stackrel{d}{=}}
\def\topp#1{^{(#1)}}
\def\nn#1{{\left\|#1\right\|}}
\def\abs#1{\left|#1\right|}
\def\ccbb#1{\left\{#1\right\}}
\def\sccbb#1{\{#1\}}
\def\pp#1{\left(#1\right)}
\def\spp#1{(#1)}
\def\bb#1{\left[#1\right]}
\def\mmid{\;\middle\vert\;}
\def\mmod{\ {\rm mod }\ }
\def\ip#1{\left\langle#1\right\rangle}
\def\aa#1{\left\langle #1\right\rangle}
\def\vv#1{{\boldsymbol #1}}
\def\vvs{{\boldsymbol s}}
\def\vvt{{\boldsymbol t}}
\def\qmwith{\quad\mbox{ with }\quad}
\def\mfa{\mbox{ for all }}
\def\mmas{\mbox{ as }}
\def\wt#1{\widetilde{#1}}
\def\wb#1{\overline{#1}}
\def\R{{\mathbb R}}
\def\Rd{{\mathbb R^d}}
\def\N{{\mathbb N}}
\def\B{{\mathbb B}}
\def\C{{\mathbb C}}
\def\D{{\mathbb D}}
\def\E{{\mathbb E}}
\def\H{{\mathbb H}}
\def\calA{\mathcal A}
\def\calE{\mathcal E}
\def\calF{\mathcal F}
\def\calM{\mathcal M}
\def\calN{\mathcal N}
\def\d{\mathsf d}
\def\MM{\mathsf M}
\def\Hn{\mathbb H^n}
\def\Rn{\mathbb R^n}
\def\Sn{\mathbb S^n}
\def\vvA{{\boldsymbol A}}
\def\vvN{{\boldsymbol N}}
\def\eab{\eta_{\alpha,\beta}}
\def\xab{\xi_{\alpha,\beta}}
\def\MpE{{\mathbb M_p(E)}}
\def\Mab{M_{\alpha,\beta}}
\title[Stable processes indexed by metric spaces]{Stable processes with stationary increments parameterized by metric spaces}
\author{Zuopeng Fu}\address{Zuopeng Fu\\Department of Mathematical Sciences\\University of Cincinnati\\2815 Commons Way\\Cincinnati, OH, 45221-0025, USA.}\email{fuzg@mail.uc.edu}
\author{Yizao Wang}\address{Yizao Wang\\Department of Mathematical Sciences\\University of Cincinnati\\2815 Commons Way\\Cincinnati, OH, 45221-0025, USA.}\email{yizao.wang@uc.edu}
\begin{document}\sloppy
\begin{abstract}
A new family of stable processes indexed by metric spaces with stationary increments are introduced. They are special cases of a new family of set-indexed stable processes with Chentsov representation. At the heart of the representation, a result on the so-called measure definite kernels is of independent interest. A limit theorem for set-indexed processes is also established. 
\end{abstract}
\keywords{L\'evy Brownian field, set-indexed process, stable process, stationary increment, Chentsov representation, measure definite kernel}
\subjclass[2010]{60G22, 
60G52;
  Secondary, 
60G60} 

\date{\today}
\maketitle
\section{Introduction}
A stochastic process is most commonly referred to as a time-indexed collection of random variables. However, stochastic processes indexed by other generic sets, sometimes referred to as {\em parameterized} by metric spaces, also have a long history in probability theory. 
The probabilistic properties of stochastic processes are intrinsically connected to the geometry of the metric space, and the interactions can be very rich.  See for example \citep{adler90introduction,adler07random,narinucci11random}, just to mention a few.

Our motivating example is the Brownian motion. Paul L\'evy first introduced in the late 40s the notion of a Brownian motion indexed by a metric space, denoted by  $(\MM,\d)$ throughout.  When the Brownian motion, say $\{\B_x\}_{x\in \MM}$, exists, it is a centered Gaussian process determined by $\B_o = 0$ for a marked point $o\in \MM$ and 
\[
\esp(\B_x-\B_y)^2 = \d(x,y), \mfa x,y\in \MM.
\]
Under the assumption $\B_o =0$,  the above is equivalent to
\[
\cov(\B_x, \B_y) = \frac12\pp{\d(x,o) + \d(y,o) - \d(x,y)}. 
\]
Such a process is known as a {\em L\'evy Brownian field} \citep{levy65processus}. 
L\'evy first considered the case of $\MM = \Sn$ 
(the Euclidean sphere), then Chentsov \citep{chentsov57mouvement} addressed the case of $\MM = \Rn$ and then \citet{molchan67some} 
investigated the case of $\MM = \Hn$ 
(the hyperbolic space) 
and more general symmetric spaces. Since the so-defined processes are Gaussian, a necessary and sufficient condition for the existence is for the metric $\d$ to be of conditionally negative type \citep{gangolli67positive}. 
Nevertheless, corresponding {\em integral representations} of L\'evy Brownian fields (i.e.~stochastic integrals with respect to a white noise) have been developed too. In particular, \citet{takenaka81brownian} explained a general framework in terms of projective geometry that provides {\em Chentsov-type} integral representations for L\'evy Brownian fields indexed by $\Rn, \Sn$ and $\Hn$.
We shall focus on stochastic processes parameterized by these three metric spaces in this paper.  
In general, if the index manifold is not simply connected, then 
a L\'evy Brownian field does not exist  \citep{venet16existence}. 

There is a lately renewed interest of investigating fractional stochastic processes indexed by metric spaces. By fractional stochastic processes, we consider extensions of fractional Brownian motions, as stochastic processes indexed by $\R_+ = [0,\infty)$, to Gaussian processes indexed by generic metric spaces, and further to stable processes \citep{cohen12stationary,takenaka10stable,istas12manifold}. 
For extensions of L\'evy Brownian fields, as a natural extension of the fractional Brownian motions indexed by $\R_+$, we name a centered Gaussian process $\{\B^H_x\}_{x\in\MM}$ a {\em fractional L\'evy Brownian field}, as long as
\equh\label{eq:fBm}
\cov(\B^H_x,\B^H_y) = \frac12\pp{\d^\beta(x,o)+\d^\beta(y,o)-\d^\beta(x,y)} \qmwith H = \beta/2,
\eque 
and $\B^H_o = 0$ for some $o\in\MM$, or equivalently $\esp(\B_x^H-\B_y^H)^2 = \d^{\beta}(x,y)$. Note that the right-hand side of \eqref{eq:fBm} a priori is not a valid covariance function for all $\beta>0$. 
Such a framework has been recently considered by \citet{istas05spherical}, and the legitimate ranges for $\beta$ are known to be intervals in the form of $(0,\beta_\MM]$, with $\beta_{\Rn} = 2, \beta_{\Sn} = \beta_{\Hn} = 1$. 
It is worth noticing that for $\Sn$ and $\Hn$, the extension of fractional L\'evy Brownian fields only exists for $H\in(0,1/2]$.
Each fractional L\'evy Brownian field \eqref{eq:fBm} also has {\em stationary increments} with respect to 
a certain group action on $\MM$, see \eqref{eq:SI} below. 

Recent developments on fractional L\'evy Brownian fields include   for example \citep{lan18strong,cheng18extremes} (indexed by $\Sn$). 
More generally, in the spatial context a Gaussian process $\{G_x\}_{x\in\MM}$ is often characterized by its variogram $v(x,y):=\esp(G_x-G_y)^2$ (then stationary increments imply that $v(x,y)$ is a function of $\d(x,y)$), and there is already a huge literature on random fields from this aspect; see for example~\citep{bierme17introduction,xiao13recent} for latest surveys on Gaussian random fields. Other types of generalizations of fractional Brownian motions include for example \citep{herbin06set,molchanov15generalisation}.

Much fewer examples have been known for 
non-Gaussian
 stable processes indexed by metric spaces. We are in particular motivated by examples of such processes with stationary increments. 
A natural extension of \eqref{eq:fBm} to stable processes, say $\{Z_x\}_{x\in\MM}$, would require necessarily that
\equh\label{eq:istas}
\frac{Z_x-Z_y}{\d^{\beta/\alpha}(x,y)} \sim S_\alpha(\sigma,0,0), \mfa x,y\in \MM,
\eque
where the right-hand side stands for symmetric $\alpha$-stable (S$\alpha$S) distribution with scale parameter $\sigma>0$. The above relates the increments of the process and the geodesic distance in a unified manner as in the Gaussian case. 
However for $\alpha\in(0,2)$, unlike the Gaussian case $\alpha=2$, \eqref{eq:istas} is a strictly weaker notion than self-similarity and stationary increments even for $\MM = \R_+$, and is satisfied by different stable processes. 
Many examples of self-similar stable processes with stationary increments exist for $\MM = \R_+$ \citep{samorodnitsky94stable,pipiras17stable}. In contrast, for other choices of $\MM$, only the following examples are known to have stationary increments and satisfy \eqref{eq:istas}:
\begin{enumerate}[a.]
\item {\em L\'evy--Chentsov stable fields} \citep{takenaka10stable} as natural extensions of L\'evy Brownian fields sharing the same Chentsov representations (to be reviewed in  Section \ref{sec:LCsf}),
\item   {\em L\'evy--Chentsov sub-stable fields} (to be reviewed in Section \ref{sec:sub-stable}, revisited recently in \citep{istas06fractional}), and
\item {\em Takenaka stable fields} \citep{takenaka91integral} for $\MM = \Rn$ only (see also \citep[Chapter 8.4]{samorodnitsky94stable}).
\end{enumerate}

The main contributions of the paper are as follows.
\begin{enumerate}[1.]
\item A new family of stable processes, referred to as {\em fractional L\'evy--Chentsov stable fields}, are introduced, as an extension of fractional L\'evy Brownian fields.  These are S$\alpha$S processes indexed by $\MM = \Rn, \Sn, \Hn$, $\alpha\in(0,2]$, and are shown to have stationary increments and also satisfy \eqref{eq:istas} with $\beta\in(0,1)$ (Theorem \ref{thm:0}). They have Chentsov-type integral representation, which  for $\alpha = 2$ is new for fractional L\'evy Brownian fields. More generally, these processes are special cases of {\em set-indexed Karlin stable processes} that we shall introduce.

\item At the core of our presentation, a result on the so-called {\em measure definite kernels} \citep{robertson98negative}  is of its own interest from analysis point of view.  Recall that a metric $\d$ is a measure definite kernel, if there exists a measure space $(E,\calE,\mu)$ and a family of sets $A_x\in \calE$ for all $x\in \MM$, such that
\equh\label{eq:MDK}
\d(x,y) = \mu(A_x\Delta A_y), \mfa x,y\in \MM.
\eque
This property is strictly stronger than that $\d$ is of conditionally negative type, and has already been used in the Chentsov representation of L\'evy Brownian fields (see e.g.~\citep{istas12manifold,takenaka81brownian}). (This property
is also a special case of the Crofton formulae in integral geometry \citep{schneider08stochastic,robertson98crofton}.)
Here, it is shown that if $\d$ is a measure definite kernel, then for all $\beta\in(0,1)$, 
so is $\d^\beta$ with respect to a different measure space and sets $\{A_x^*\}_{x\in \MM}$ (Proposition \ref{Prop:1}). 
Based on this result, it follows immediately that fractional L\'evy--Chentsov stable fields satisfy \eqref{eq:istas} (Proposition \ref{prop:increments}).

\item A limit theorem is established for set-indexed Karlin stable processes (Theorem \ref{thm:1}), as a generalization of recent developments on the Karlin model \citep{durieu16infinite,durieu17infinite}, an infinite urn scheme originally considered by \citet{karlin67central} (see also \citep{gnedin07notes}).

\end{enumerate}

The paper is organized as follows. Section \ref{sec:review} reviews L\'evy--Chentsov stable fields and introduces the notations to be used later. Section \ref{sec:Karlin} introduces the set-indexed Karlin stable processes, and fractional L\'evy--Chentsov stable fields parameterized by 
metric spaces.
Section \ref{sec:MDK} explains the key result on measure definite kernels. 
Section \ref{sec:limit} 
establishes a limit theorem for the general set-indexed Karlin stable processes.

\section{L\'evy--Chentsov stable fields}\label{sec:review}

In this section, we review the notion of L\'evy--Chentsov stable fields parameterized by metric spaces \citep{takenaka91integral,takenaka10stable}. Most results have been known, and the goal here is to present a self-contained and systematic presentation in the framework of Chentsov random fields, which plays a crucial role in the following sections. 
\subsection{Chentsov representation}
Let $(\MM,\d)$ be a metric space. Later on we shall focus on $\MM = \Rn,\Sn, \Hn$, and $\d$ the corresponding geodesic metric. 
We take the convention throughout that an S$\alpha$S  distribution with scale parameter $\sigma>0$, denoted by $S_\alpha(\sigma,0,0)$, has characteristic function $\exp(-\sigma^\alpha|\theta|^\alpha)$ for $\alpha\in(0,2]$, including the Gaussian case $\alpha=2$.  
By a {\em L\'evy--Chentsov S$\alpha$S field indexed by $\MM$}, 
we consider the following stable process with Chentsov representation: 
\[
 Z_\alpha(x) := \int_E\ind_{A_x} dM_\alpha = M_\alpha(A_x), x\in\MM,
\]
 where $M_\alpha$ is an S$\alpha$S random measure on a measurable space $(E,\calE)$ with a $\sigma$-finite control measure $\mu$, for $\alpha\in(0,2]$, 
 and $\{A_x\}_{x\in\MM}$ is 
 a collection of elements from $\calE$ 
 such that $\mu(A_x)<\infty$. Recall that the random measure $M_\alpha$ evaluated at every measurable set $A\in \calE$, provided $\mu(A)<\infty$, is 
 distributed as $S_\alpha(\mu(A)^{1/\alpha},0,0)$,
 is $\sigma$-additive over 
  disjoint sets almost surely, and is independently scattered: that is, $M_\alpha$ over disjoint sets are independent. 
Moreover, the characteristic function of finite-dimensional distributions of $Z_\alpha$ is, 
 for all $d\in\N\equiv\{1,2,\dots\}, \vv\theta\in\Rd,x_1,\dots,x_d\in\MM$, $\Lambda^d = \{0,1\}^d\setminus\{(0,\dots,0)\}$,
\equh\label{eq:ch.f.}
\esp\exp\pp{i\summ j1d\theta_jZ_\alpha(x_j)} = \exp\pp{-\sum_{\vv\delta\in\Lambda^d}\abs{\aa{\vv\theta,\vv\delta}}^\alpha\mu\pp{\bigcap_{j=1}^dA_{x_j}^{\delta_j}}}, 
\eque
where $\aa{\vv\theta,\vv\delta} = \summ j1d \theta_j\delta_j$ and we follow the convention here and below\[
A^\delta = \begin{cases}
A & \delta = 1\\
A^c & \delta = 0
\end{cases}.
\]See \citep[Chapter 8.2]{samorodnitsky94stable} for more on Chentsov random fields indexed by $\MM = \Rn$. 

 By convention,  the L\'evy--Chentsov S$\alpha$S field is pinned down to zero at some point $o\in\MM$ ($Z_\alpha(o)=0$), so $\mu(A_o) = 0$. 
We are in particular interested in those processes that have stationary increments. This notion is well understood in the time series context. To introduce this notion for $\MM$-indexed processes, we consider in addition, a group action $G$  on the metric space $\MM$,  that is,  a mapping from $G\times \MM$ to $\MM$, denoted by $(g,x)\mapsto gx$. We then say that  $\{Z_\alpha(x)\}_{x\in \MM}$ {\em has stationary increments with respect to $G$}, if
\equh\label{eq:SI}
\ccbb{Z_\alpha(g(x))-Z_\alpha(g(o))}_{x\in \MM} \eqd \ccbb{Z_\alpha(x)}_{x\in\MM} \mfa g\in G.
\eque
In the case $\MM = \R$, take $G = \R$ and $gx = x+g$ for all $g,x\in\R$.
To verify \eqref{eq:SI}, for Gaussian fields, it suffices to verify the covariances. For non-Gaussian stable ones, the above condition can be checked by verifying
\equh\label{eq:SI0}
\mu\pp{\bigcap_{j=1}^d A_{g(x_j)}^{\delta_j}\Delta A_{g(o)}} = \mu\pp{\bigcap_{j=1}^dA_{x_j}^{\delta_j}},
\eque
for all $d\in\N, x_1,\dots,x_d\in \MM, \vv\delta \in\Lambda^d$
(see \citep[Theorem 8.2.6]{samorodnitsky94stable}). 

\begin{remark}The pinning-down assumption $Z_\alpha(o) = 0$ is only a convention. One can easily show that if 
$Z_\alpha$ has stationary increments 
in the sense of \eqref{eq:SI}, then for the process $Z_\alpha'$ defined by $Z_\alpha'(x) :=Z_\alpha(x) + Y, x\in\MM$ for any random variable $Y$, not necessarily depending on $Z_\alpha$, we have $\{Z_\alpha'(g(x))-Z_\alpha'(g(o))\}_{x\in\MM}\eqd \{Z_\alpha'(x)-Z_\alpha'(o)\}_{x\in\MM}$.
\end{remark}

\subsection{L\'evy--Chentsov stable fields}\label{sec:LCsf}
Now, we review the Chentsov representation of L\'evy--Chentsov S$\alpha$S fields indexed by a metric space $\MM = \Rn, \Sn, \Hn$. by specifying in each case the choice of
\equh\label{eq:notations}
(\MM,\d), o, G, (E,\calE,\mu), \{A_x\}_{x\in\MM}.
\eque
Here, for each choice of $(\MM,\d)$, $\d$ is the geodesic metric on $\MM$, $o$ is an (arbitrary) fixed starting point where the process is pinned down to zero, $G$ is a group action on $\MM$. We need the following assumption. 
\begin{assumption}\label{A:1}\begin{enumerate}[(i)]
\item \label{a:1}
  $(E,\calE,\mu)$ and $\{A_x\}_{x\in\MM}$ are chosen so that $\d$ is  a measure definite kernel associated with $(E,\calE,\mu), \{A_x\}_{x\in\MM}$ via \eqref{eq:MDK}. 
  \item \label{a:3} The group action $G$ acts also on $E$ as another group action, such that
\equh\label{eq:gA}
A_{g(x)}\Delta A_{g(y)} = g(A_x\Delta A_y) = (gA_x) \Delta (gA_y),\quad \mfa g\in G, x,y\in\MM.
\eque
  \item  \label{a:4} The measure $\mu$ is $G$-invariant.

  \item 
  $A_o = \emptyset$.

\end{enumerate}
\end{assumption}
\begin{remark}
 Assumptions \eqref{a:1} \eqref{a:3} and \eqref{a:4} imply that $g$  preserves the metric, since
\[
\d(g(x),g(y)) = \mu(A_{g(x)}\Delta A_{g(y)}) = \mu(g(A_x\Delta A_y)) = \mu(A_x\Delta A_y) = \d(x,y).
\]
\end{remark}
\begin{remark} Our presentation is slightly different from \citep{takenaka81brownian,takenaka10stable}, where the three cases can be put in a unified framework of projective geometry. See also with \citep{istas12manifold}.
\end{remark}
Below are the notations \eqref{eq:notations} in each case.
For Euclidean spaces and spheres, more background on group actions can be found in for example \citep{schneider08stochastic}. For hyperbolic spaces, see \citep{cohen12stationary} for a review for probabilists.

\begin{example}
[Euclidean space]
This is referred to as the L\'evy--Chentsov stable fields in \citep{samorodnitsky94stable}. We set $\MM = \Rn$, 
$E$ as the space of all hyperplanes of $\Rn$ not including the origin, parametrized as $
E: = \ccbb{(s, r): s \in \Sn, 0 < r < \infty}$, $A_x: = \ccbb{(s, r): s \in \Sn, 0 < r < \ip{s, x}}, x\in\Rn$ as
the set of all hyperplanes that separate $o$ and $x$, and $\mu(dsdr) = dsdr$ ($ds$ is the Lebesgue measure on $\Sn$). 
In this case, $G$ is the rigid body motion on $\Rn$, and acts on $E$ in the canonical way.
 \end{example}
\begin{example}[Euclidean sphere]
We set  $\MM = \Sn:= \{x\in\R^{n+1}: \nn x = 1\}$, 
and $E$ is the space of all totally geodesic submanifolds of  $\Sn$, each denoted by $h_x:= \ccbb{y\in\Sn:\aa{x,y} = 0}, x\in\Sn$. Set
$A_x := \{y\in\Sn:h_y \mbox{ separates $x$ and $o$}\}, x\in\Sn$,
$\mu$ is the Lebesgue measure on $\Sn$ and $G = SO(n+1)$.  The induced action on $E$ is $gh_x = h_{g(x)}$. 
\end{example}
\begin{remark}
An equivalent and more common representation is as follows:  introduce the hemisphere
$H_x:=\ccbb{y\in\Sn:\aa {x,y}>0}$, 
and define
\[
Z_\alpha'(x) := \int_E
(\ind_{H_x}-\ind_{H_o})
dM_\alpha, x\in\MM
\]
with the same S$\alpha$S random measure.  Indeed, one can show that $Z_\alpha'$ has the same distribution as
\[
\ccbb{\int_E \abs{\ind_{H_x}-\ind_{H_o}}dM_\alpha}_{x\in\MM} \eqd\ccbb{\int_E \ind_{H_x\Delta H_o}dM_\alpha}_{x\in\MM}
\]
by a straightforward calculation, and also $H_x\Delta H_o = A_x$ for all $x\in\MM$. 
\end{remark}

\begin{example}[Hyperbolic space]
For the sake of simplicity, we only describe $\H^2$. 
Consider $\MM = \D \equiv \{z\in\C:|z|<1\}\cong \H^2$, the Poincar\'e disc, with
\[
\d (z,z') = \frac12\log \frac{\abs{1-\wb zz'}+\abs{z-z'}}{\abs{1-\wb zz'}-\abs{z-z'}}, z,z'\in\D,
\]
and
\[
G \equiv SU(1,1) = \ccbb{
\pp{\begin{array}{rr}
\alpha & \beta\\
\wb\beta & \wb\alpha
\end{array}}: |\alpha|^2-|\beta|^2 = 1, \alpha,\beta\in\C}.
\]
  Let $E$ denote the collection of all geodesic lines of $\D$: each $h\in E$ is an intersection of an Euclidean circle, say $S_h$, with $\D$, including the diameters of $\D$ viewed as the intersections of Euclidean circles with infinite diameters. We parameterize $h\in E$ by $(\varphi,\psi)\in[0,\pi/2]\times[0,2\pi)$, with
$S_h = \sccbb{  e^{i(\psi+\varphi)},e^{i(\psi-\varphi)}}$, 
and take 
$\mu(dh) = (\sin\varphi)^{-2}cd\varphi d\psi$ (intuitively, $\varphi$ is for the size and $\psi$ for the direction). Then $G$ acts on $E$, $G^* = G$,  and $\mu$ is $SU(1,1)$-invariant on $E$. Take 
$A_z:=\ccbb{h\in E: h \mbox{ separates $z$ and $0$}}, z\in\D.$
\end{example}
\begin{remark} \label{rem:alpha=2}
Strictly speaking, in the case $\alpha = 2$, our representations above differ from the corresponding ones in the literature by a multiplicative constant of $\sqrt 2$. This is easily seen as for our random measure, $M_2(A)\sim S_2(\mu(A)^{1/2},0,0)$ is a centered Gaussian random variable with variance $2\mu(A)$, while often a Gaussian random measure with control measure $\mu$ evaluated at $A$ is defined to have variance $\mu(A)$. 
\end{remark}

\subsection{L\'evy--Chentsov sub-stable 
fields}\label{sec:sub-stable}
There is a simple trick to obtain a new S$\alpha$S field by multiplying to an old S$\alpha'$S one (with $\alpha'\in(\alpha,2]$) an independent totally skewed $\alpha/\alpha'$-stable random variable, and the so-obtained fields are known as {\em sub-stable fields} (or {\em sub-Gaussian} when $\alpha' = 2$). Fix $\alpha$ and $\alpha'$ so that $\alpha'\in(\alpha,2]$, and let $\xi$ be a totally skewed stable random variable with law
 determined by $\esp e^{-\theta\xi} = e^{-\theta^{\alpha/\alpha'}}$ for all $\theta\ge 0$. Then in our context, we refer to
\equh\label{eq:sub-stable}
Z_{\alpha,\alpha'}(x):= \xi^{1/\alpha'}Z_{\alpha'}(x), x\in\MM, 
\eque
as a {\em L\'evy--Chentsov sub-stable S$\alpha$S field}. 
 It is however not of Chentsov type. 
The characteristic function of $Z_{\alpha,\alpha'}$ is
\[
\esp\exp\pp{-i\summ j1d \theta_jZ_{\alpha,\alpha'}(x_j)} = \exp\pp{-\nn{\summ j1d \theta_j\ind_{A_{x_j}}}_{\alpha'}^{\alpha}}, \mfa \vv\theta\in\Rn,
\]
with $\nn \cdot_{\alpha'} = \int_E|\cdot |^{\alpha'}d\mu$. 
See \citep[Proposition 3.8.2]{samorodnitsky94stable} for more details. 
The fact that the right-hand side above is a valid characteristic function has been known since at least \citep{hardin81isometries}. Recently, \citet{istas06fractional} revisited this fact without making connection to the sub-stable representation \eqref{eq:sub-stable}. 
\section{A new family of stable processes}\label{sec:Karlin}
\subsection{Set-indexed Karlin stable processes}\label{sec:siKarlin}
We now introduce a family of set-indexed stable processes, of which our extensions of L\'evy Brownian fields are special cases. We shall understand the law of the processes by their finite-dimensional distributions (see Remark \ref{rem:version} for issues on their sample paths). 
Throughout, we assume that $\alpha\in(0,2]$ and $\beta\in(0,1)$.

Let $(E,\calE)$ be a measure space with a $\sigma$-finite measure $\mu$. Let $\calA$ be the family of subsets of $\calE$ with finite $\mu$-measure. 
We let $\mathbb M_p(E)$ denote the canonical space of Radon point measures on $(E,\calE)$, equipped with the Borel $\sigma$-algebra $\calM_p(E)$. In particular, every $m\in \mathbb M_p(E)$ takes the form $m = \sum_{i\in\N}\delta_{x_i}$ with $x_i\in E$, and for all $A\in\calE$, $m(A)\in\N\cup\{0,\infty\}$ and $m(A)<\infty$ if $A$ is compact in $\MM$. For background on topological issues, see \citep[Chapter 3]{resnick87extreme}.
Given $r>0$, let $\proba'_{r,\mu}$ denote the probability measure on $(\mathbb M_p(E),\calM_p(E))$ induced by the Poisson point process on $(E,\calE)$ with mean measure $r\cdot \mu$. That is, 
\[
\proba'_{r,\mu}(m(A) = k) = \frac{(r\mu(A))^k}{k!}e^{-r\mu(A)}, k = 0,1,\dots, A\in\calA.
\]
Set
\equh\label{eq:mu_beta}
 \mu_\beta(\cdot) := c_\beta \int_0^\infty r^{-\beta-1}\proba'_{r,\mu}(\cdot) dr  \qmwith c_\beta := \frac{\beta 2^{1-\beta}}{\Gamma(1-\beta)}
\eque
as a $\sigma$-finite measure on $(\mathbb M_p(E),\calM_p(E))$. We introduce the {\em set-indexed Karlin stable process}  given by
\equh\label{eq:Y}
Y_{\alpha,\beta}(A):= \int_{\mathbb M_p(E)}\inddd{m(A)\ \rm odd}\Mab(dm), A\in\calA,
\eque
where $\Mab$ is an S$\alpha$S random measure on $\mathbb M_p$ with control measure $\mu_\beta$. 
Note that the process $Y$ is still of Chentsov type: by introducing
\[
A^*:=\ccbb{m\in\mathbb M_p(E): m(A)\ \rm odd}, A\in\calA, 
\]
we have
\[
Y_{\alpha,\beta}(A) = \int_{\mathbb M_p(E)}\ind_{A^*}\Mab(dm).
\]
\begin{remark}
The original Karlin stable processes investigated in \citep{durieu17infinite} corresponds to 
\[
Y_{\alpha,\beta}(t)\equiv Y_{\alpha,\beta}([0,t]) = \int_{\mathbb M_p(\R_+)}\inddd{m([0,t]) \ \rm odd}\Mab(dm),
\] which has a more convenient representation
\equh\label{eq:Karlin}
\ccbb{Y_{\alpha,\beta}(t)}_{t\in\R_+} \eqd \ccbb{\int_{\R_+\times\Omega'}\inddd{N'(rt) \ \rm odd}\wt M_{\alpha,\beta}(drd\omega')}_{t\in\R_+},
\eque
where $N'$ is a standard Poisson point process on $\R_+$, defined on another probability space $(\Omega',\calF',\proba')$ and $\wt M_{\alpha,\beta}$ is an S$\alpha$S random measure on $\R_+\times\Omega'$ with control measure $c_\beta r^{-\beta-1}dr\proba'(d\omega')$. 
However, such a representation cannot be extended to the case $\MM = \Sn$ or $\Hn$ (e.g.~in the case of $\Sn$ the `scaled set' $rA_x$ would not make sense). Instead we work with stochastic integrals over $\MpE$.
\end{remark}
\begin{remark}Another representation of \eqref{eq:Y} with a flavor of {\em doubly stochastic processes} \citep{samorodnitsky94stable} is to write
\[
\ccbb{Y_{\alpha,\beta}(A)}_{t\in\calA} \eqd \ccbb{\int_{\R_+\times\Omega'}\inddd{N'_r(A)\ \rm odd}\wt M_{\alpha,\beta}'(drd\omega')}_{t\in\R_+},
\]
where $\{N_r'\}_{r>0}$ is a family of independent Poisson point processes on $(E,\calE)$ with intensity measure $r\mu$ respectively, defined on another probability space $(\Omega',\calF',\proba')$. 
\end{remark}
\begin{remark}\label{rem:version}
For the Karlin stable process in \eqref{eq:Karlin}, in the Gaussian case it is a fractional Brownian motion with Hurst index $H = \beta/2\in(0,1)$, and hence the path properties are well known. We expect then to be able to improve and obtain regularity results on sample paths (see \citep{lan18strong} for $\MM = \Sn$). In the stable case, however, even for $\MM = \R_+$ it remains open whether the Karlin stable process \eqref{eq:Karlin} has a version in the space  $D$ for $\alpha\in[1,2)$ (see \citep[Remark 2]{durieu17infinite}).
\end{remark}

\subsection{Fractional L\'evy--Chentsov stable fields}
For a metric space $(\MM,\d)$ along with notations \eqref{eq:notations} satisfying Assumption \ref{A:1},  we name the process
\equh\label{eq:eta}
\eab(x):= Y_{\alpha,\beta}(A_x), x\in \MM
\eque
as the {\em fractional L\'evy--Chentsov stable field} with parameters $\alpha\in(0,2], \beta\in(0,1)$. 
Our main result is the following.
\begin{theorem}\label{thm:0}
Each $\{\eab(x)\}_{x\in\MM}$ is an S$\alpha$S process  with stationary increments as in \eqref{eq:SI}.
\end{theorem}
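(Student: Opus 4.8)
The plan is to establish the two claims separately: that $\eab$ is an S$\alpha$S process, and that it has stationary increments with respect to $G$. The first claim is essentially automatic from the construction. Since $\eab(x) = Y_{\alpha,\beta}(A_x) = \int_{\MpE}\ind_{A_x^*}\Mab(dm)$ is a stochastic integral of a deterministic integrand against an S$\alpha$S random measure, the process $\{\eab(x)\}_{x\in\MM}$ is, by the general theory of stable integrals (see \citep[Chapter 3]{samorodnitsky94stable}), an S$\alpha$S process: every finite linear combination $\sum_j \theta_j\eab(x_j) = \int_{\MpE}(\sum_j\theta_j\ind_{A_{x_j}^*})\,\Mab(dm)$ is S$\alpha$S, which is the definition. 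One should also check $\mu_\beta(A_x^*)<\infty$ so that the integrals are well defined; this follows because $\proba'_{r,\mu}(m(A_x) \text{ odd}) \le \proba'_{r,\mu}(m(A_x)\ge 1) = 1 - e^{-r\mu(A_x)} \le r\mu(A_x)\wedge 1$, and $c_\beta\int_0^\infty r^{-\beta-1}(r\mu(A_x)\wedge 1)\,dr <\infty$ for $\beta\in(0,1)$ since $\mu(A_x)<\infty$.

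The substantive part is stationary increments. Following the criterion \eqref{eq:SI0} (i.e.\ \citep[Theorem 8.2.6]{samorodnitsky94stable}), it suffices to exhibit, for each $g\in G$, a measure-preserving transformation of $(\MpE,\mu_\beta)$ that maps the integrand $\ind_{A_{g(x)}^*}\Delta\, (\text{pinning at }g(o))$ to $\ind_{A_x^*}$. Concretely, the group action $G$ on $E$ furnished by Assumption \ref{A:1}\eqref{a:3}--\eqref{a:4} induces a map $g_\sharp$ on $\MpE$ by pushing forward point measures, $g_\sharp\pp{\sum_i\delta_{x_i}} = \sum_i\delta_{gx_i}$. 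I would first verify that $g_\sharp$ pushes $\proba'_{r,\mu}$ to $\proba'_{r,\mu}$ for every $r>0$: this is immediate because $g$ preserves $\mu$, so a Poisson process with mean $r\mu$ is carried to a Poisson process with mean $r\mu$. Integrating against $c_\beta r^{-\beta-1}dr$ then shows $g_\sharp$ preserves $\mu_\beta$. Next, since $m(A_{g(x)}) = (g_\sharp^{-1}m)(A_x)$ — because $g$ acts on $E$ and $A_{g(x)} = g(A_x)$, which one extracts from \eqref{eq:gA} with $y = o$ and $A_o=\emptyset$ — we get $\ind_{(g_\sharp m)(A_{g(x)})\text{ odd}} = \ind_{m(A_x)\text{ odd}}$, i.e.\ $g_\sharp^{-1}(A_{g(x)}^*) = A_x^*$. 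Together with the pinning-down (note $\mu(A_o)=0$ forces $A_o^* $ to be $\mu_\beta$-null, so the $g(o)$-correction is negligible), this gives \eqref{eq:SI0} and hence \eqref{eq:SI}.

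I expect the main obstacle to be purely bookkeeping rather than conceptual: one must be careful that $g_\sharp$ is a well-defined bijective bimeasurable map on the canonical point-measure space $\MpE$ (so that ``measure-preserving'' and the change-of-variables in the stable integral are justified), and that the parity functional $m\mapsto \ind_{m(A)\text{ odd}}$ is genuinely $\calM_p(E)$-measurable and interacts correctly with $g_\sharp$ on the relevant sets $\bigcap_j A_{x_j}^{\delta_j}$, not just on single sets $A_x$. The verification of \eqref{eq:SI0} in its full form involves the finite intersections $\bigcap_{j=1}^d A_{g(x_j)}^{\delta_j}$; here I would use that $g$ acting on $E$ is a bijection, so $\bigcap_j (gA_{x_j})^{\delta_j} = g\pp{\bigcap_j A_{x_j}^{\delta_j}}$, reducing the parity count for the transformed sets to that for the original sets via $g_\sharp$. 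Once these measure-theoretic points are nailed down, the equality of characteristic functions \eqref{eq:ch.f.} (now with $\mu_\beta$ and $A^*$ in place of $\mu$ and $A$) before and after applying $g$ is immediate, completing the proof.
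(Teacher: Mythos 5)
Your overall architecture is the right one and matches the paper's: reduce to \eqref{eq:SI0} for the starred sets, push point measures forward by $g$ via $g_\sharp$, note that $g_\sharp$ carries $\proba'_{r,\mu}$ to itself because $\mu$ is $G$-invariant, and integrate in $r$ to conclude that $\mu_\beta$ is preserved. The integrability check $\mu_\beta(A_x^*)<\infty$ is also correct (in fact $\mu_\beta(A_x^*)=\mu(A_x)^\beta$). But there is a genuine gap at the step where you claim $A_{g(x)}=g(A_x)$ ``from \eqref{eq:gA} with $y=o$ and $A_o=\emptyset$.'' Taking $y=o$ in \eqref{eq:gA} only yields $A_{g(x)}\Delta A_{g(o)}=gA_x$, and $A_{g(o)}$ is \emph{not} empty or null in general: the assumption is $A_o=\emptyset$, not $A_{g(o)}=\emptyset$, and indeed $\mu(A_{g(o)})=\d(o,g(o))>0$ whenever $g$ moves $o$ (in $\Rn$, $A_{g(o)}$ is the set of hyperplanes separating $o$ from $g(o)$). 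Consequently $g_\sharp^{-1}(A_{g(x)}^*)=A_x^*$ is false, and your parenthetical dismissal of the $g(o)$-correction confuses $A_o^*$ (which is $\mu_\beta$-null) with $A_{g(o)}^*$ (which has $\mu_\beta$-measure $\d(o,g(o))^\beta>0$). The subtraction of $\eab(g(o))$ in \eqref{eq:SI} is essential and cannot be discarded.

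The missing ingredient is the parity identity
\[
\ccbb{m(A)\ {\rm odd}}\,\Delta\,\ccbb{m(B)\ {\rm odd}} \;=\; \ccbb{m(A\Delta B)\ {\rm odd}},
\]
which follows from $m(A)+m(B)=m(A\Delta B)+2\,m(A\cap B)$. Applied with $A=A_{g(x)}$, $B=A_{g(o)}$ together with \eqref{eq:gA}, it gives
\[
A_{g(x)}^*\Delta A_{g(o)}^* \;=\; \ccbb{m\pp{A_{g(x)}\Delta A_{g(o)}}\ {\rm odd}} \;=\; \ccbb{m(gA_x)\ {\rm odd}} \;=\; (gA_x)^*,
\]
and similarly for the complemented sets in $\bigcap_{j=1}^d(A_{g(x_j)}^*)^{\delta_j}\Delta A_{g(o)}^*$. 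Only after this reduction does your pushforward argument apply: the identity \eqref{eq:SI1} becomes $\mu_\beta\spp{\bigcap_j (gA_{x_j})^{*\delta_j}}=\mu_\beta\spp{\bigcap_j (A_{x_j}^*)^{\delta_j}}$, which follows for each $r>0$ from the $G$-invariance of $\mu$ exactly as you describe. With this correction your proof coincides with the paper's; without it, the argument only covers the (unrealistically strong) equivariant case $A_{g(x)}=gA_x$, which fails in all three motivating examples.
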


\begin{proof}
It is equivalent to prove, for all $g\in G$, 
\[
\ccbb{\int_{\mathbb M_p(E)}\ind_{A_{g(x)}^*\Delta A_{g(o)}^*}\Mab(dm)}_{x\in\MM} \eqd \ccbb{\int_\MpE \ind_{A_x^*}\Mab(dm)}_{x\in\MM}.
\]
The above is equivalent to (recall \eqref{eq:SI0}), for all $d\in\N$, $\vv\delta\in\Lambda^d$, $x_1,\dots,x_d\in\MM$, 
\equh\label{eq:SI1}
\mu_\beta\pp{\bigcap_{j=1}^d(A_{g(x_j)}^*)^{\delta_j}\Delta A_{g(o)}^*} = \mu_\beta\pp{\bigcap_{j=1}^d(A_{x_j}^*)^{\delta_j}}.
\eque
We have
\begin{align*}
A_{g(x)}^*\Delta A_{g(y)}^* & = \ccbb{m(A_{g(x)}) \ \rm odd} \Delta \ccbb{m(A_{g(y)}) \ \rm odd}
 = \ccbb{m(A_{g(x)}\Delta A_{g(y)})\ \rm odd}\\  
& = \ccbb{m((gA_x)\Delta (gA_y))\ \rm odd} =  (gA_x)^*\Delta (gA_y)^*,  
\end{align*}
where in the third step we applied \eqref{eq:gA}, and
\[
(A_{g(x)}^*)^c\Delta A_{g(y)}^* = (A_{g(x)}^*\Delta A_{g(y)}^*)^c = \pp{(gA_x)^*\Delta (gA_y)^*}^c = (gA_x)^{*c}\Delta (gA_y)^*.
\]
Then, 
\eqref{eq:SI1} becomes (note $gA_o = \emptyset$)
\[
\mu_\beta\pp{\bigcap_{j=1}^d(gA_{x_j})^{*\delta_j}} = \mu_\beta\pp{\bigcap_{j=1}^d(A_{x_j}^*)^{\delta_j}}.
\]
The above shall follow from (recall $\mu_\beta$ in \eqref{eq:mu_beta}), for all $r>0$, 
\begin{multline}\label{eq:SI3}
\proba_{r,\mu}'\pp{m(gA_{x_j}) = \delta_j \mod 2, j=1,\dots,d}\\
= \proba_{r,\mu}'\pp{m(A_{x_j}) = \delta_j \mod 2, j=1,\dots,d}.
\end{multline}
The left-hand side above can be expressed as
\[
\wt \proba_{r,\mu}'\pp{m(A_{x_j}) = \delta_j \mod 2, j=1,\dots,d},
\]
where $\wt \proba_{r,\mu}'$ is the probability measure on $\MpE$ induced by the Poisson point process on $E$ with intensity measure $r\cdot \mu(g\cdot)$. 
 Since $\mu$ is $G$-invariant, the probability above is nothing but the right-hand side of \eqref{eq:SI3}. This completes the proof.
\end{proof}

The law of the increment of the field over any two points is uniquely determined by  their geodesic distance. 
\begin{proposition}\label{prop:increments}
For the stable process  $\eta_{\alpha,\beta}$ defined in \eqref{eq:eta}, 
we have that
\[
\frac{\eta_{\alpha,\beta}(x)-\eta_{\alpha,\beta}(y)}{\d^{\beta/\alpha}(x,y)} \sim S_\alpha(1,0,0), \mfa x,y\in \MM.
\]
\end{proposition}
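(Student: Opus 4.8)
The plan is to read off the law of the increment from the Chentsov representation and reduce everything to evaluating one $\mu_\beta$-measure. Using the $A^*$-form of \eqref{eq:Y},
\[
\eta_{\alpha,\beta}(x)-\eta_{\alpha,\beta}(y) = \int_{\mathbb M_p(E)}\pp{\ind_{A_x^*}-\ind_{A_y^*}}\,\Mab(dm).
\]
The integrand takes values in $\{-1,0,1\}$, so $\abs{\ind_{A_x^*}-\ind_{A_y^*}}^\alpha=\ind_{A_x^*\Delta A_y^*}$ for every $\alpha\in(0,2]$; hence, by the properties of integrals against an S$\alpha$S random measure recalled in Section \ref{sec:review}, this increment is distributed as $S_\alpha\big(\mu_\beta(A_x^*\Delta A_y^*)^{1/\alpha},0,0\big)$. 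Invoking the scaling relation $cW\sim S_\alpha(c\sigma,0,0)$ for $W\sim S_\alpha(\sigma,0,0)$, $c>0$, the statement reduces to the identity $\mu_\beta(A_x^*\Delta A_y^*)=\d^\beta(x,y)$ (for $x\neq y$; when $x=y$ the increment is a.s.\ $0$ and the claim is read in the degenerate sense).

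Next I would collapse the symmetric difference to a single starred set, exactly as in the proof of Theorem \ref{thm:0}: for any point measure $m$ one has $m(A_x)+m(A_y)=m(A_x\Delta A_y)+2m(A_x\cap A_y)$, so $\ccbb{m(A_x)\ \rm odd}\Delta\ccbb{m(A_y)\ \rm odd}=\ccbb{m(A_x\Delta A_y)\ \rm odd}$, i.e.\ $A_x^*\Delta A_y^*=(A_x\Delta A_y)^*$. Writing $B:=A_x\Delta A_y\in\calA$, the measure definite kernel property (Assumption \ref{A:1}) gives $\mu(B)=\d(x,y)=:t$, and what remains is to compute $\mu_\beta(B^*)$ for a set $B$ with $\mu(B)=t$.

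The final step is that computation. Under $\proba'_{r,\mu}$ the count $m(B)$ has a Poisson distribution with parameter $r\mu(B)=rt$, and the probability that a Poisson$(\lambda)$ variable is odd equals $(1-e^{-2\lambda})/2$; so by \eqref{eq:mu_beta},
\[
\mu_\beta(B^*)=c_\beta\int_0^\infty r^{-\beta-1}\,\frac{1-e^{-2rt}}{2}\,dr .
\]
Substituting $u=2rt$ and using $\int_0^\infty u^{-\beta-1}(1-e^{-u})\,du=\Gamma(1-\beta)/\beta$ (integration by parts, valid since $0<\beta<1$), the right-hand side becomes $c_\beta\,2^{\beta-1}\Gamma(1-\beta)t^\beta/\beta$, which is exactly $t^\beta=\d^\beta(x,y)$ by the choice $c_\beta=\beta 2^{1-\beta}/\Gamma(1-\beta)$. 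Plugging this back into the first step yields $\eta_{\alpha,\beta}(x)-\eta_{\alpha,\beta}(y)\sim S_\alpha(\d^{\beta/\alpha}(x,y),0,0)$, which is the proposition.

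I do not anticipate a real obstacle; the argument is essentially bookkeeping. The points deserving a little care are: (i) that the integrand is bounded by $1$, so raising $\abs{\,\cdot\,}$ to the power $\alpha$ collapses it to an indicator uniformly in $\alpha$; (ii) the elementary evaluation of the $r$-integral together with the check that the normalizing constant $c_\beta$ was tuned precisely to remove every spurious factor; and (iii) the degenerate case $x=y$. Finally, note that the identity $\mu_\beta(A_x^*\Delta A_y^*)=\d^\beta(x,y)$ established here is exactly the special case of Proposition \ref{Prop:1} needed for this result, so the argument simultaneously confirms, self-containedly, that $\d^\beta$ is a measure definite kernel associated with $(\mathbb M_p(E),\calM_p(E),\mu_\beta)$ and $\{A_x^*\}_{x\in\MM}$.
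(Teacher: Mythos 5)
Your argument is correct and follows essentially the same route as the paper: read off the scale parameter $\mu_\beta(A_x^*\Delta A_y^*)^{1/\alpha}$ from the Chentsov representation, reduce via the parity identity $A_x^*\Delta A_y^*=(A_x\Delta A_y)^*$, and evaluate the resulting $r$-integral with the Gamma-function identity to get $\d^\beta(x,y)$. The only difference is that you prove the identity $\mu_\beta(A_x^*\Delta A_y^*)=\d^\beta(x,y)$ inline, whereas the paper delegates exactly this computation to Proposition \ref{Prop:1}.
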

\begin{proof}
This follows from a straightforward computation:
\begin{align*}
\esp\exp \pp{i\theta(\eab(x)-\eab(y))} 
&= \exp\pp{-|\theta|^\alpha\int_{\mathbb M_p(E)}{\inddd{m(A_x\Delta A_y)\ \rm odd} } \mu_\beta(dm)}\\
& = \exp\pp{-\mu_\beta(A_x^*\Delta A_y^*)|\theta|^\alpha} = \exp\pp{-\d^\beta (x,y)|\theta|^\alpha}. 
\end{align*}
In the last step, we applied an identity regarding $\d^\beta$ and $\mu_\beta$, established separately in Proposition \ref{Prop:1}.
\end{proof}
\begin{remark}The Karlin stable fields indexed by metric spaces
 $\eab\equiv \{\eta_{\alpha,\beta}(x)\}_{x\in\MM}$ are different from L\'evy--Chentsov stable fields and L\'evy--Chentsov sub-stable fields. 
To see this, it suffices to compare the spherical representations of finite-dimensional distributions \citep{samorodnitsky94stable}. The L\'evy--Chentsov sub-stable fields are spectrally continuous, while the other two are spectrally discrete, of which one could check readily that the spectral measures are different. 
In the case $\MM=\Rn$, it is also easily seen to be different from the spectrally discrete Takenaka random fields \citep{takenaka91integral}.
\end{remark}
We conclude this section with a few immediate consequences on the so-called set-indexed fractional Brownian motions investigated by \citet{herbin06set,herbin07multiparameter},  
whose motivation is different from ours.
\subsection{Set-indexed fractional Brownian motions}
Recall the set-indexed Karlin stable process $\{Y_{\alpha,\beta}(A)\}_{A\in\calA}$ as in \eqref{eq:Y}. In the Gaussian case, one could compute
\[
\cov(Y_{2,\beta}(A),Y_{2,\beta}(B)) = 
\mu^\beta(A)+\mu^\beta(B) - \mu^\beta(A\Delta B)
, A,B\in\calA,
\]
and recognize the covariance structure of set-indexed fractional Brownian motions (the above actually differs from the one in \citep{herbin06set,herbin07multiparameter} by a multiplicative constant of $1/2$; see a detailed calculation below).
A special choice of the index set is the collection of rectangles in Euclidean space 
\[
[\vv0,\vvt] \equiv \{\vvs =(s_1,\dots,s_n): 0\le s_j\le t_j, j=1,\dots,n\}, \vvt = (t_1,\dots,t_n)\in\R_+^n,
\]
in which case the process is referred to as 
a
 {\em multiparameter fractional Brownian motion}  (see also \citep{richard15fractional,richard17some}). This process has different properties from other extensions of fractional Brownian motions, and in particular it does not have stationary increments (see e.g.~\citep{richard15fractional} for comparisons). 
Following the same notion, we refer to 
\[
\xab(\vvt):= Y_{\alpha,\beta}(\vvt)\equiv \int_{\MpE}\inddd{m([\vv0,\vvt])\  \rm odd}\Mab(dm), \vvt\in\R_+^n
\]
as 
a {\em multiparameter fractional stable field}. This integral representation in the Gaussian case ($\alpha = 2$) seems new. 

We next consider a natural decomposition of set-indexed fractional Brownian motions $Y_{2,\beta}$, inspired by the decomposition of fractional Brownian motions by bi-fractional Brownian motions introduced by \citet{lei09decomposition}.  Write $W_\beta \equiv Y_{2,\beta}$ from now on. We consider a slightly different representation of $W_\beta$:
\[
\ccbb{W_\beta(A)}_{A\in\calA} \eqd \ccbb{\int_{\R_+\times \MpE} \inddd{m(A) \ \rm odd}\wt M_{2,\beta}(drdm)}_{A\in\calA},
\]
where $\wt M_{2,\beta}$ is a Gaussian (S$\alpha$S with $\alpha=2$) random measure  on $\R_+\times\MpE$ with control measure $c_\beta r^{-\beta-1}dr\proba'_{r,\mu}(dm)$,  
and then consider its decomposition
\[
\ccbb{W_\beta(A)}_{A\in\calA}  \eqd \ccbb{W_{1,\beta}(A)+W_{2,\beta}(A)}_{A\in\calA}
\]
with
\begin{align*}
W_{1, \beta}(A) & := \int_{\R_+\times \MpE} \pp{\inddd{m(A) \ \rm odd} - q_r(A)}\wt M_{2,\beta}(drdm)\\
W_{2, \beta}(A) & := \int_{\R_+\times \MpE} q_r(A) \wt M_{2,\beta}(drdm)\\
q_r(A)& := \proba_{r,\mu}'\pp{m(A) \ \rm odd}.
\end{align*}
For the case $\MM = \R_+$ and relation to decomposition of fractional Brownian motions, see \citep[Section 2.2]{durieu16infinite}. 
\begin{proposition} 
$\{W_{1, \beta}(A)\}_{A\in\calA}$ and $\{W_{2, \beta}(A)\}_{A\in\calA}$ are independent centered Gaussian processes with covariance functions, for $A_1, A_2 \in \calA$,
\begin{align*}
\cov(W_{1, \beta}(A_1), W_{1, \beta}(A_2)) & = 
(\mu(A_1) + \mu(A_2))^{\beta} - \mu^{\beta}(A_1\Delta A_2)
\\
\cov(W_{2, \beta}(A_1), W_{2, \beta}(A_2)) & = 
\mu^{\beta}(A_1) + \mu^{\beta}(A_2) - (\mu(A_1) + \mu(A_2))^{\beta}.
\end{align*}
\end{proposition}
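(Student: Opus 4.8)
The plan is to compute the joint characteristic function (equivalently, the covariance structure) of the pair $(W_{1,\beta}, W_{2,\beta})$ directly from the stochastic integral representation, and then read off independence and the two covariance formulas. First I would record the basic isometry for Gaussian integrals against $\wt M_{2,\beta}$: for deterministic $f,g\in L^2(\R_+\times\MpE, c_\beta r^{-\beta-1}dr\,\proba'_{r,\mu}(dm))$, one has $\cov(\int f\,d\wt M_{2,\beta}, \int g\,d\wt M_{2,\beta}) = 2\int fg\,c_\beta r^{-\beta-1}dr\,\proba'_{r,\mu}(dm)$ (keeping in mind the factor of $2$ flagged in Remark \ref{rem:alpha=2}, which I will carry through consistently so the final constants match the stated formulas). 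With $f_A(r,m) := \inddd{m(A)\ \rm odd} - q_r(A)$ for the $W_{1,\beta}$ integrand and $g_A(r,m) := q_r(A)$ for the $W_{2,\beta}$ integrand, the key observation is that $\int (\inddd{m(A)\ \rm odd}-q_r(A))\,\proba'_{r,\mu}(dm) = 0$ for each fixed $r$, so integrating $f_{A_1}\cdot g_{A_2}$ against $\proba'_{r,\mu}(dm)$ already vanishes before integrating in $r$; this gives $\cov(W_{1,\beta}(A_1), W_{2,\beta}(A_2)) = 0$ for all $A_1,A_2$, and since both processes are jointly Gaussian (being linear functionals of a single Gaussian random measure), uncorrelated implies independent.

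Next I would compute the two autocovariances. Everything reduces to the elementary Poisson identity
\[
\proba'_{r,\mu}\pp{m(A)\ \rm odd} = \frac{1-e^{-2r\mu(A)}}{2},
\]
from which one gets $\E_{\proba'_{r,\mu}}[\inddd{m(A_1)\ \rm odd}\inddd{m(A_2)\ \rm odd}]$ by decomposing $A_1, A_2$ into the disjoint pieces $A_1\setminus A_2$, $A_2\setminus A_1$, $A_1\cap A_2$ (on which $m$ is independent Poisson) and using parity combinatorics; after simplification this inner expectation is a linear combination of $1$, $e^{-2r\mu(A_1)}$, $e^{-2r\mu(A_2)}$, $e^{-2r\mu(A_1\Delta A_2)}$. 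For $W_{2,\beta}$ one needs instead $\int q_r(A_1)q_r(A_2)\,c_\beta r^{-\beta-1}dr$, and $q_r(A_1)q_r(A_2) = \frac14(1-e^{-2r\mu(A_1)})(1-e^{-2r\mu(A_2)})$ expands into the same four exponential types. The whole computation then hinges on the single integral identity
\[
c_\beta\int_0^\infty r^{-\beta-1}\pp{1-e^{-2r a}}dr = (2a)^\beta\cdot\frac{\beta\,2^{1-\beta}}{\Gamma(1-\beta)}\cdot\frac{\Gamma(1-\beta)}{\beta\,2^\beta}\cdot 2^{\beta-1}\cdots,
\]
i.e.\ after plugging in $c_\beta = \beta 2^{1-\beta}/\Gamma(1-\beta)$ one obtains exactly $c_\beta\int_0^\infty r^{-\beta-1}(1-e^{-2ra})\,dr = a^\beta$; this is precisely the normalization that makes $\mu_\beta$-computations produce $\mu^\beta$, and it is consistent with how Proposition \ref{Prop:1} and Proposition \ref{prop:increments} are used. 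I would verify this once (integration by parts reducing to $\Gamma(1-\beta)$) and then just apply it termwise.

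Carrying this out, the linear combinations collapse: for $W_{1,\beta}$ the constant terms cancel against the $q_r(A_i)$ subtractions and the surviving exponentials assemble into $(\mu(A_1)+\mu(A_2))^\beta - \mu^\beta(A_1\Delta A_2)$; for $W_{2,\beta}$ the analogous bookkeeping gives $\mu^\beta(A_1)+\mu^\beta(A_2) - (\mu(A_1)+\mu(A_2))^\beta$ — note the two add up to $\mu^\beta(A_1)+\mu^\beta(A_2)-\mu^\beta(A_1\Delta A_2)$, which is a useful consistency check against the stated covariance of $W_\beta = W_{1,\beta}+W_{2,\beta}$ and against the set-indexed fractional Brownian motion covariance recalled just before the proposition. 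The main obstacle, such as it is, is purely bookkeeping: getting the cancellations right in the parity expansion of the product indicator over the three disjoint pieces, and keeping the factor-$2$ convention (both the $2$ in the Gaussian isometry and the $2r$ inside the Poisson exponents) straight so that the final constants come out exactly as displayed rather than off by a power of $2$. I would organize the expansion as a table of coefficients for the four exponential types to make the cancellations transparent, then apply the integral identity to each column.
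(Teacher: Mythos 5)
Your proposal follows essentially the same route as the paper: the factor-$2$ Gaussian isometry, the Poisson parity identities, expansion into the four exponential types, and termwise application of the Gamma-function integral; your independence argument (the integrand of $W_{1,\beta}$ is centered under $\proba'_{r,\mu}$ for each fixed $r$, so the cross-covariance vanishes, and joint Gaussianity upgrades this to independence) is exactly what the paper leaves as ``verified similarly.'' One concrete correction: your normalized integral identity is off by a factor of $2$. Since $\int_0^\infty(1-e^{-2ar})r^{-\beta-1}dr=(2a)^\beta\Gamma(1-\beta)/\beta$ and $c_\beta=\beta 2^{1-\beta}/\Gamma(1-\beta)$, one gets
\[
c_\beta\int_0^\infty r^{-\beta-1}\pp{1-e^{-2ra}}dr=2a^\beta,
\]
not $a^\beta$ (this is also forced by Proposition \ref{Prop:1}, where the extra $\frac12$ comes from the parity probability $\frac12(1-e^{-2ra})$). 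It is precisely this $2$ that cancels the overall prefactor $2\cdot\frac14=\frac12$ coming from the isometry and from the two parity indicators, so with your identity as written the bookkeeping would land on half of each stated covariance. Everything else, including the disjoint-decomposition computation of $\proba'_{r,\mu}(m(A_1)\ {\rm odd},\,m(A_2)\ {\rm odd})=\frac14(1-e^{-2r\mu(A_1)}-e^{-2r\mu(A_2)}+e^{-2r\mu(A_1\Delta A_2)})$, checks out.
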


\begin{proof}
Recall that for a Poisson random variable $Z$ with parameter $\lambda>0$, 
\equh\label{eq:odd}
\proba(Z \mbox{ is odd}) = \frac12(1-e^{-2\lambda}).
\eque We first compute the covariance function of $\{W_{2, \beta}(A)\}_{A\in\calA}$. 
For $A_1, A_2 \in\calA$,
\begin{align*}
\cov(W_{2, \beta}(A_1), W_{2, \beta}(A_2)) & = 
2
\int_{\R_+} q_r(A_1)q_r(A_2) c_{\beta}r^{-\beta-1}dr \\
& = 2\int_{\R_+} \frac{1}{4} \pp{1-e^{-2r\mu(A_1)}} \pp{1-e^{-2r\mu(A_2)}}c_{\beta}r^{-\beta-1}dr 
\end{align*}
(the factor 2 is due to our convention for the Gaussian random measure, see Remark \ref{rem:alpha=2}),
and for $\beta\in(0,1)$, the desired formula then follows immediately from the identity for the Gamma function
\equh \label{gammafunction}
\int_0^\infty (1-e^{-ar})r^{-\beta-1}dr = \frac{a^\beta\Gamma(1-\beta)}{\beta}, a>0.
\eque
Similarly, 
\begin{multline*}
 \cov(W_{1, \beta}(A_1), W_{1, \beta}(A_2)) \\
 = 2\int_{\R_+\times \MpE} \pp{\proba'_{r,\mu}\pp{m(A_1) \ {\rm odd}, m(A_2) \ \rm odd} - q_r(A_1)q_r(A_2)} c_{\beta}r^{-\beta-1}dr,
\end{multline*}
and  
\[
\proba'_{r,\mu}\pp{m(A_1) \ {\rm odd}, m(A_2) \ \rm odd} = \frac14\pp{1-e^{-2r\mu(A_1)}-e^{-2r\mu(A_2)}+e^{-2r\mu(A_1\Delta A_2)}}.
\] The desired covariance formula  then follows. 
The independence of $\{W_{1, \beta}(A)\}_{A\in\calA}$ and $\{W_{2, \beta}(A)\}_{A\in\calA}$ can be verified similarly. 
\end{proof}
\section{Measure definite kernels}\label{sec:MDK}
In this section, we extract a result on measure definite kernels that we developed and used implicitly in our previous analysis. This result is of independent interest for metric analysis.

Let $(\MM,\d)$ be a metric space. Recall that the metric $\d$ is of {\em conditionally negative type}, if for all $n\ge 2$ and $x_1,\dots,x_n\in \MM$ and $\lambda_1,\dots,\lambda_n\in\R$ with $\summ i1n \lambda_i = 0$, we have
$\summ i1n\summ j1n \lambda_i\lambda_j\d(x_i,x_j)\le 0$.
Recall the definition of a metric $\d$ being a measure definite kernel in \eqref{eq:MDK}. Sometimes it is convenient to write equivalently
\[
\d(x,y) = \int \abs{\ind_{A_x}-\ind_{A_y}} d\mu, \mfa  x,y\in \MM.
\]
A measure definite kernel as a metric  is necessarily of conditionally negative type, as for all collections $\{A_i\}_{i=1,\dots,n}\subset \{A_x\}_{x\in \MM}$,
\[
\pp{\summ i1n \lambda_i\ind_{A_i}}^2  = \frac12\summ i1n \summ j1n \lambda_i\lambda_j\pp{\ind_{A_i}+\ind_{A_j}-\ind_{A_i\Delta A_j}} = -\frac12\summ i1n\summ j1n \lambda_i\lambda_j \ind_{A_i\Delta A_j}.
\] Although the converse is not true.
It is well known that the mapping $\d \mapsto \d^\beta$ preserves the property of being conditionally negative. The following shows that the mapping also preserves the property of being a measure definite kernel. 
Recall notations for $\mathbb M_p(E), \calM_p(E),\mu_\beta,\proba_{r,\mu}',A^*$ as in Section \ref{sec:siKarlin}.

\begin{proposition}\label{Prop:1}
Suppose that $(M,\d)$ is a metric space and  $\d$ is a measure definite kernel with respect to a measure space $(E,\calE,\mu)$ and $\{A_x\}_{x\in \MM}\subset \calE$, as in 
\eqref{eq:MDK}. Then for all $\beta\in(0,1)$, 
\[
\d^\beta(x,y)
 =\mu_\beta(A_x^*\Delta A_y^*),
\]
with $\mu_\beta$ as in \eqref{eq:mu_beta} and $A^* := \ccbb{m\in\MpE: m(A) \ \rm odd}, A\in\calA$. 
\end{proposition}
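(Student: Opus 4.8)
The plan is to reduce the claimed identity to a one-dimensional computation over the intensity parameter $r$ of the Poisson point processes underlying $\mu_\beta$. The first step is an elementary parity observation: for any $A,B\in\calA$ and any point measure $m$, one has $m(A)+m(B) = m(A\Delta B) + 2m(A\cap B)$, so $m(A)+m(B)$ and $m(A\Delta B)$ have the same parity; hence $m$ lies in exactly one of the sets $A^*,B^*$ precisely when $m(A\Delta B)$ is odd, i.e.
\[
A^*\Delta B^* = (A\Delta B)^*,\qquad A,B\in\calA .
\]
Applying this with $A=A_x$, $B=A_y$ and recalling $\mu(A_x\Delta A_y)=\d(x,y)$ from \eqref{eq:MDK}, the proposition reduces to the single statement $\mu_\beta(C^*)=\mu(C)^\beta$ for every $C\in\calA$.

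For the second step I would unfold the definition \eqref{eq:mu_beta}, writing
\[
\mu_\beta(C^*) = c_\beta\int_0^\infty r^{-\beta-1}\,\proba'_{r,\mu}\bigl(m(C)\ \mathrm{odd}\bigr)\,dr ,
\]
and then use that under $\proba'_{r,\mu}$ the variable $m(C)$ is Poisson with parameter $r\mu(C)$, so by the identity \eqref{eq:odd} for the probability that a Poisson variable is odd, $\proba'_{r,\mu}(m(C)\ \mathrm{odd})=\tfrac12\bigl(1-e^{-2r\mu(C)}\bigr)$. Substituting and invoking the Gamma-function identity \eqref{gammafunction} with $a=2\mu(C)$ gives
\[
\mu_\beta(C^*) = \frac{c_\beta}{2}\int_0^\infty\bigl(1-e^{-2\mu(C)r}\bigr)r^{-\beta-1}\,dr = \frac{c_\beta}{2}\cdot\frac{(2\mu(C))^\beta\,\Gamma(1-\beta)}{\beta}.
\]
The final step is to insert $c_\beta=\beta 2^{1-\beta}/\Gamma(1-\beta)$ and observe that every constant cancels, leaving $\mu_\beta(C^*)=\mu(C)^\beta$; taking $C=A_x\Delta A_y$ finishes the proof.

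As for difficulties, there is no genuine analytic obstacle here: once the parity identity is in place the rest is a direct substitution, and this is exactly the identity already invoked in the proof of Proposition~\ref{prop:increments}. The only points that deserve a line of care are bookkeeping ones — that $C^*$ is $\calM_p(E)$-measurable, being the preimage of an arithmetic-progression subset of $\NN\cup\{0,\infty\}$ under the evaluation map $m\mapsto m(C)$; that $\mu_\beta$ is well defined and $\sigma$-finite as asserted around \eqref{eq:mu_beta}; and that no interchange of integrals is needed, since $\mu_\beta(C^*)$ is by construction already the $r$-integral of the nonnegative integrand $r^{-\beta-1}\proba'_{r,\mu}(C^*)$. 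One should also note the degenerate case $\mu(C)=0$ (i.e. $x=y$, where $A_x\Delta A_y$ is $\mu$-null), in which both sides vanish, consistently with the convention $A_o=\emptyset$ used elsewhere.
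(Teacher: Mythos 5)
Your proof is correct and follows essentially the same route as the paper's: both rest on the parity identity $A_x^*\Delta A_y^*=(A_x\Delta A_y)^*$, the formula \eqref{eq:odd} for the probability that a Poisson variable is odd, and the Gamma integral \eqref{gammafunction}. Your reorganization into the single clean claim $\mu_\beta(C^*)=\mu(C)^\beta$ is a presentational difference only.
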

\begin{proof}
First, since $\mu(A_x\Delta A_y) = \d(x,y)$, by \eqref{gammafunction}
we write 
\[
\d^\beta(x,y) = \int_{\R_+}\frac12\pp{1-e^{-2r\mu(A_x\Delta A_y)}}c_\beta r^{-\beta-1}dr.
\]
On the other hand, for every $r>0$, recalling \eqref{eq:odd},  we have
\begin{align*}
\d^\beta(x,y)
& = \int_{\R_+}\proba'_{r,\mu}(\{m(A_x)\ {\rm odd}\}\Delta \{m(A_y)\ {\rm odd}\})c_\beta r^{-\beta-1}dr\\
& =\int_{\R_+} \int_{\mathbb M_p(E)}\abs{\inddd{m(A_x)\ \rm odd} - \inddd{m(A_y)\ \rm odd}} \proba'_{r,\mu}(dm)c_\beta r^{-\beta-1}dr\\
& = \int_{\mathbb M_p(E)}\abs{\ind_{A_x^*} - \ind_{A_y^*}} \mu_\beta(dm) = \mu_\beta(A_x^*\Delta A_y^*).
\end{align*}

\end{proof}
\section{A limit theorem for set-indexed Karlin stable processes}\label{sec:limit}
Limit theorems for stochastic processes indexed by 
metric spaces
 are not new (e.g.~\citep{lavancier07invariance,ossiander85levy,bierme10selfsimilar,bierme14invariance}). However, very few theorems have been known beside $\R^n$ and $\R^n_+$-indexed examples, with the notable exception for sphere-indexed processes investigated by \citet{estrade10ball}.
Our model is a variation of an infinite urn scheme considered by \citet{karlin67central}. 
This version of the model was introduced for $\MM = \R_+$ in the proofs of \citep{durieu17infinite} as the Poissonized version of the corresponding Karlin model, and has an aggregation nature.

Let $(E, \calE, \mu)$ be a measure space with $\mu$ a $\sigma$-finite measure. 
Let $\{p_k\}_{k\in\N}$ be prescribed strictly positive numbers, and $\rho>0$ a scaling parameter that eventually goes to infinity. 
For each $\rho>0$, let $\{N_k\topp \rho\}_{k \in \N}$ be a family of independent Poisson point processes with mean measure $\rho p_k \cdot \mu$ on $(E, \calE)$ and let $\{\varepsilon_k\}_{k \in \N}$ be another family of i.i.d.~random variables, independent from the Poisson point processes.  
We shall restrict to those sets in $\calE$ with finite $\mu$-measure, the collection of which denoted by $\calA$. Our model is then defined as
\[
U_\rho(A) := \sum_{k \in \N} \varepsilon_k \inddd{N_k\topp\rho(A) \ \mbox{odd}}, \rho>0, A \in \calA.
\]
We assume, with
$\nu := \sum_{k \in \N} \delta_{1/p_k}$, that for some $\beta \in (0, 1)$, 
\begin{equation} \label{eq:1}
\nu(t) = \max\{k \in\N: p_k \geq 1/t\} = t^\beta L(t), \quad t >0,
\end{equation}
where $L$ is a slowly varying function at $\infty$, and 
 the characteristic function $\phi(\theta) = \esp\exp(i\theta\varepsilon_1)$ satisfies
\equh\label{eq:ch.f.epsilon}
1-\phi(\theta) \sim \sigma_\varepsilon^\alpha|\theta|^\alpha \mmas \theta\to 0.
\eque
The above follows for example if for some $\alpha \in (0, 2)$,
\[
\lim_{x \to \infty} \frac{\P (|\varepsilon_1| > x)}{x^{-\alpha}} = C_\varepsilon \in (0, \infty),
\]
and $\sigma_{\varepsilon}^{\alpha} = C_\varepsilon \int_{0}^{\infty} x^{-\alpha}\sin x dx$
(e.g.~\citep[Theorem 8.1.10]{bingham87regular}),
or if $\alpha = 2$, $\varepsilon_1$ is centered and 
$\sigma_{\varepsilon}^2 = \esp\varepsilon_1^2/2<\infty$.  Recall $c_\beta$ in \eqref{eq:mu_beta}.
\begin{theorem} \label{thm:1}
Under assumptions \eqref{eq:1} and \eqref{eq:ch.f.epsilon}, 
\[
\left(\frac{U_\rho(A)}{b_\rho}\right)_{A \in \calA} \fddto  
\left(Y_{\alpha,\beta}(A)\right)_{A \in \calA} \qmwith b_\rho:=\pp{\frac{\beta}{c_\beta}\rho^\beta L(\rho)}^{1/\alpha}\sigma_\varepsilon.
\]\end{theorem}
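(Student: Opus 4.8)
The plan is to establish convergence of each finite-dimensional characteristic function and then invoke the L\'evy continuity theorem. Fix $d\in\N$, sets $A_1,\dots,A_d\in\calA$ and $\vv\theta\in\Rd$, and let $\vv\pi_k:=\pp{\indd{N_k\topp\rho(A_j)\ {\rm odd}}}_{j=1}^d\in\{0,1\}^d$ be the parity pattern of the $k$-th Poisson point process, so that $\summ j1d\theta_jU_\rho(A_j)=\sum_{k\in\N}\varepsilon_k\aa{\vv\theta,\vv\pi_k}$. Conditioning on $\{N_k\topp\rho\}_{k\in\N}$, using that $\{\varepsilon_k\}_{k\in\N}$ is i.i.d.\ and independent of the point processes, and then that the $N_k\topp\rho$ are independent across $k$, we obtain
\[
\esp\exp\pp{\frac{i}{b_\rho}\summ j1d\theta_jU_\rho(A_j)}=\prod_{k\in\N}\esp\,\phi\pp{\frac{\aa{\vv\theta,\vv\pi_k}}{b_\rho}}=\prod_{k\in\N}\pp{1+\sum_{\vv\delta\in\{0,1\}^d}p_{k,\rho}(\vv\delta)\bb{\phi\pp{\frac{\aa{\vv\theta,\vv\delta}}{b_\rho}}-1}},
\]
where $p_{k,\rho}(\vv\delta):=\proba(\vv\pi_k=\vv\delta)=\proba'_{\rho p_k,\mu}\pp{m(A_j)\equiv\delta_j\ ({\rm mod}\ 2),\ 1\le j\le d}$, and we used $\sum_{\vv\delta}p_{k,\rho}(\vv\delta)=1$ and $\phi(0)=1$; only those $\vv\delta\in\Lambda^d$ with $\aa{\vv\theta,\vv\delta}\neq0$ contribute.

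Write $x_k:=\sum_{\vv\delta}p_{k,\rho}(\vv\delta)\bb{\phi(\aa{\vv\theta,\vv\delta}/b_\rho)-1}$ for the $k$-th factor minus one. Since $b_\rho\to\infty$, assumption \eqref{eq:ch.f.epsilon} gives $\abs{1-\phi(\aa{\vv\theta,\vv\delta}/b_\rho)}\le C b_\rho^{-\alpha}$ for all relevant $\vv\delta$ once $\rho$ is large, hence $\sup_k\abs{x_k}=O(b_\rho^{-\alpha})\to0$ while $\sum_k\abs{x_k}=O\pp{b_\rho^{-\alpha}\sum_{\vv\delta\in\Lambda^d}\sum_kp_{k,\rho}(\vv\delta)}$, which the next step will show is bounded. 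A standard triangular-array estimate --- using $\abs{\log(1+x)-x}\le\abs x^2$ for $\abs x\le\tfrac12$ and then $\sum_k\abs{x_k}^2\le(\sup_k\abs{x_k})\sum_k\abs{x_k}$ --- reduces $\log$ of the product to $\sum_kx_k+o(1)$; interchanging the (finite) sum over $\vv\delta$ with the (absolutely convergent) sum over $k$ and applying \eqref{eq:ch.f.epsilon} termwise in $\vv\delta$ then gives
\[
\log\esp\exp\pp{\frac{i}{b_\rho}\summ j1d\theta_jU_\rho(A_j)}=-\sum_{\vv\delta\in\Lambda^d}\abs{\aa{\vv\theta,\vv\delta}}^\alpha\,\frac{\sigma_\varepsilon^\alpha}{b_\rho^\alpha}\sum_{k\in\N}p_{k,\rho}(\vv\delta)+o(1).
\]

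The core of the argument is the claim that, for each fixed $\vv\delta\in\Lambda^d$,
\[
\frac{\sigma_\varepsilon^\alpha}{b_\rho^\alpha}\sum_{k\in\N}p_{k,\rho}(\vv\delta)\ \longrightarrow\ \mu_\beta\pp{\bigcap_{j=1}^d(A_j^*)^{\delta_j}}\qmmas\rho\to\infty.
\]
Set $f(r):=\proba'_{r,\mu}\pp{m(A_j)\equiv\delta_j\ ({\rm mod}\ 2),\ 1\le j\le d}$, so that $p_{k,\rho}(\vv\delta)=f(\rho p_k)$; moreover $\bigcap_{j=1}^d(A_j^*)^{\delta_j}=\ccbb{m\in\MpE:m(A_j)\equiv\delta_j\ ({\rm mod}\ 2),\ \forall j}$, so the definition \eqref{eq:mu_beta} of $\mu_\beta$ yields $\mu_\beta\pp{\bigcap_j(A_j^*)^{\delta_j}}=c_\beta\int_0^\infty r^{-\beta-1}f(r)\,dr$. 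Since $\vv\delta\neq\vv0$, there is $j_0$ with $\delta_{j_0}=1$, whence $f(r)\le\proba'_{r,\mu}(m(A_{j_0})\ {\rm odd})\le r\mu(A_{j_0})$, while $f\le1$ in general; thus $r^{-\beta-1}f(r)$ is integrable on $(0,\infty)$, being $O(r^{-\beta})$ near $0$ (integrable as $\beta<1$) and $O(r^{-\beta-1})$ near $\infty$ (integrable as $\beta>0$). Now $\sum_kf(\rho p_k)$ is precisely a Karlin-type functional of the point measure $\nu=\sum_k\delta_{1/p_k}$, and from the regular-variation hypothesis \eqref{eq:1} and a Karamata/Abelian argument --- as used throughout the analysis of the Karlin model, cf.\ \citep{karlin67central,gnedin07notes,durieu16infinite,durieu17infinite} --- one gets
\[
\sum_{k\in\N}f(\rho p_k)\ \sim\ \beta\rho^\beta L(\rho)\int_0^\infty r^{-\beta-1}f(r)\,dr\ =\ \frac{\beta}{c_\beta}\rho^\beta L(\rho)\,\mu_\beta\pp{\bigcap_{j=1}^d(A_j^*)^{\delta_j}}.
\]
Dividing by $b_\rho^\alpha/\sigma_\varepsilon^\alpha=\tfrac{\beta}{c_\beta}\rho^\beta L(\rho)$ proves the claim (and the boundedness invoked above). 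Substituting back, the logarithm above converges to $-\sum_{\vv\delta\in\Lambda^d}\abs{\aa{\vv\theta,\vv\delta}}^\alpha\mu_\beta\pp{\bigcap_j(A_j^*)^{\delta_j}}$, which by \eqref{eq:ch.f.} applied to $Y_{\alpha,\beta}$ --- of Chentsov type with $Y_{\alpha,\beta}(A)=\Mab(A^*)$ and control measure $\mu_\beta$ --- equals $\log\esp\exp\pp{i\summ j1d\theta_jY_{\alpha,\beta}(A_j)}$, giving the stated convergence of finite-dimensional distributions.

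The main obstacle is the Karamata step: one must justify interchanging the limit $\rho\to\infty$ with the summation over $k$ (equivalently an integration against $\nu$), which requires Potter bounds for the slowly varying $L$ together with dominated convergence, using exactly the integrability of $r^{-\beta-1}f(r)$ near $0$ (where $f(r)=O(r)$ and $\beta<1$) and near $\infty$ (where $f$ is bounded and $\beta>0$); a parallel truncation of the tail of the product is needed to make the triangular-array estimate fully rigorous. Everything else is bookkeeping, and the whole scheme is the set-indexed counterpart of the one-dimensional Karlin-model computations in \citep{durieu16infinite,durieu17infinite}.
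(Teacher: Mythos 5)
Your proposal is correct, and it follows the same overall strategy as the paper: reduce to convergence of finite-dimensional characteristic functions, and identify the limit through the asymptotics of $\sum_{k}\P\spp{\vvN_k\topp\rho(\vvA)=\vv\delta \mmod 2}$ via the regular-variation hypothesis \eqref{eq:1}, with exactly the Karamata/integration-by-parts computation and the same integrability check on $r^{-\beta-1}f(r)$ that the paper performs. The one genuine structural difference is where the Poisson point processes are integrated out. The paper conditions on the whole family $\{N_k\topp\rho\}$, writes the conditional characteristic function as $\prod_{\vv\delta}\phi(\cdot)^{M_\rho^{\vv\delta}(\vvA)}$ with the \emph{random} exponents $M_\rho^{\vv\delta}(\vvA)$, and therefore needs the convergence in probability \eqref{eq:limit_Mn}, which it obtains from the first-moment asymptotics plus the elementary variance bound $\var(M_\rho^{\vv\delta}(\vvA))\le \E M_\rho^{\vv\delta}(\vvA)$ for sums of independent indicators, followed by dominated convergence. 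You instead exploit independence across $k$ to factor the unconditional characteristic function into a deterministic infinite product $\prod_k(1+x_k)$, so that only the first-moment asymptotics $\sum_k p_{k,\rho}(\vv\delta)\sim (b_\rho^\alpha/\sigma_\varepsilon^\alpha)\,\mu_\beta(\bigcap_j(A_j^*)^{\delta_j})$ is needed; the price is the triangular-array estimate $\log\prod_k(1+x_k)=\sum_kx_k+O((\sup_k|x_k|)\sum_k|x_k|)$, which you justify correctly using $\sup_k|x_k|=O(b_\rho^{-\alpha})$ and the boundedness of $b_\rho^{-\alpha}\sum_k|x_k|$. Both routes rest on the identical analytic core (the interchange of $\rho\to\infty$ with the sum over $k$, handled in the paper by \citep[Lemma~1]{durieu17infinite} and in your sketch by Potter bounds and dominated convergence), so neither is materially harder; yours dispenses with the concentration step at the cost of slightly more bookkeeping in the product expansion. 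The constants in your Karamata step match $b_\rho^\alpha/\sigma_\varepsilon^\alpha=\frac{\beta}{c_\beta}\rho^\beta L(\rho)$ and the definition \eqref{eq:mu_beta}, and the final identification with \eqref{eq:ch.f.} is correct.
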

\begin{proof}
Consider $d\in\N$, 
$\vvA=(A_1, \dots, A_d) \in \calA^d$ and $\boldsymbol{\delta}=(\delta_1, \dots, \delta_d) \in \Lambda_d$, and for any point measure $m$ on $(E,\calE)$, for the sake of simplicity write
\[
\ccbb{\vv m(\vvA)=\boldsymbol{\delta} \mmod 2} \equiv \ccbb{m(A_j)=\delta_j \mmod 2 \ \mbox{for all} \ j=1, \dots, d}.
\]
The following statistics play a crucial role:
\[M_\rho^{\boldsymbol{\delta}}(\vvA) := \sum_{k \in \N} \inddd{\vvN_k\topp\rho(\vvA)=\vv\delta \mmod 2},
\]
Let $N\topp r$ be a Poisson point process on $(E,\calE)$ with intensity measure $r\mu(\cdot)$. 
Introduce
\begin{align*}
\mathfrak m^{\vv \delta}(\vvA) &:= \int_{\R_+}\proba\pp{\vv N\topp r(\vv A) = \vv \delta \mmod 2}c_\beta r^{-\beta-1}dr\\
& = \int_{\MpE}\inddd{\vv m(\vv A) = \vv \delta \mmod 2}\mu_\beta(dm).
\end{align*}
The key estimate is
\equh\label{eq:limit_Mn}
\lim_{\rho \to \infty} \frac{M_\rho^{\boldsymbol{\delta}}(\vvA)}{b_\rho^\alpha} = \frac{\mathfrak m^{\boldsymbol{\delta}}(\vvA)}{\sigma_\varepsilon^\alpha}
 \mbox{ in probability.}
\eque
Observe that 
\begin{align*}
\E M_\rho^{\boldsymbol{\delta}}(\vvA) 
&= \sum_{k \in \N} \P \pp{\vvN_k\topp\rho(\vvA) = \boldsymbol{\delta} \mmod 2} 
 = \int_{0}^{\infty} \P \pp{\vvN^{(\rho/x)}(\vvA)=\boldsymbol{\delta} \mmod 2} \nu (dx).
\end{align*}
Set $\varphi_{\vvA}(s):=\P (\vvN^{(s)}(\vvA)=\boldsymbol{\delta} \mmod 2)$. It is easy to see that $\varphi_{\vvA}$ is differentiable and vanishes at zero. Therefore, integrating by parts, we have
$$\E M_\rho^{\boldsymbol{\delta}}(\vvA) = \int_{0}^{\infty}\varphi_{\vvA}(\rho/x)\nu(dx) = \int_{0}^{\infty} \frac{1}{x^2}\varphi_{\vvA}'\pp{\frac{1}{x}}\nu(\rho x)dx.$$
Then,
\begin{align*}
\lim_{\rho \to \infty} \frac{\E M_\rho^{\boldsymbol{\delta}}(\vvA)}{\rho^{\beta}L(\rho)} & = \lim_{\rho \to \infty} \int_{0}^{\infty} \frac{1}{x^2}\varphi_{\vvA}'\pp{\frac{1}{x}} \frac{\nu(\rho x)}{\rho^{\beta}L(\rho)}dx \\
& = \int_{0}^{\infty} \varphi_{\vvA}'\pp{\frac{1}{x}}x^{\beta-2}dx = \int_{0}^{\infty}\varphi_{\vvA}\pp{\frac{1}{x}} \beta x^{\beta-1}dx\\
& = \int_{0}^{\infty} \P\pp{\vvN^{(r)}(\vvA)=\boldsymbol{\delta} \mmod 2} \beta r^{-\beta-1}dr,
\end{align*}
where in the second step, we applied the assumption \eqref{eq:1} on $\nu$, and the interchange of the limit and the integral can be verified as explained in \citep[Lemma 1]{durieu17infinite}.
Furthermore, since 
$\var \spp{M_\rho^{\boldsymbol{\delta}}(\vvA)} 
 \leq \sum_{k \in \N} \P \spp{\vvN^{(\rho)}_k(\vvA)=\boldsymbol{\delta} \mmod 2} = \E M_\rho^{\boldsymbol{\delta}}(\vvA)$,
the $L^2$ convergence holds. We have thus shown \eqref{eq:limit_Mn}.

Now we prove the desired convergence by computing the characteristic function of the finite-dimensional distribution. 
For all $\vv\theta\in\R^d$, we write
\begin{align*}
\E \exp\pp{i\sum_{j=1}^{d} \theta_j \frac{U_\rho(A_j)}{b_\rho}} &= \E \exp\pp{i\sum_{j=1}^{d} \frac{\theta_j}{b_\rho}\sum_{k \in \N} \varepsilon_k \inddd{N_k\topp\rho(A_j) \ \mbox{odd}}} \\
& = \E \exp\pp{i\sum_{\boldsymbol{\delta} \in \Lambda_d} \frac{\abs{\ip{\boldsymbol{\theta}, \boldsymbol{\delta}}}}{b_\rho} \sum_{k \in \N} \varepsilon_k \inddd{\vv N_k\topp\rho(\vv A) = \vv\delta \mmod 2}}.
\end{align*}
Let $\calN$ denote the $\sigma$-algebra generated by $\{N_k\}_{k \in \N}$. 
Recall that $\phi$ is the characteristic function of $\varepsilon_1$.
Then, the above expression becomes
$$\E \ccbb{\prod_{\boldsymbol{\delta} \in \Lambda_d} \E \bb{\exp \pp{i \frac{\abs{\ip{\boldsymbol{\theta}, \boldsymbol{\delta}}}}{b_\rho}\sum_{\ell=1}^{M_\rho^{\boldsymbol{\delta}}(\vvA)}\varepsilon_\ell^{\boldsymbol{\delta}}} \mmid \calN}} = \E \pp{\prod_{\boldsymbol{\delta} \in \Lambda_d} \phi\pp{\frac{\abs{\ip{\boldsymbol{\theta}, \boldsymbol{\delta}}}}{b_\rho}}^{M_\rho^{\boldsymbol{\delta}}(\vvA)}}. $$
Therefore
\begin{align*} 
\E \exp\pp{i\sum_{j=1}^{d} \theta_j \frac{U_\rho(A_j)}{b_\rho}} & = \E \exp \pp{\sum_{\boldsymbol{\delta} \in \Lambda_d} M_\rho^{\boldsymbol{\delta}}(\vvA) \log \phi \pp{\frac{\abs{\ip{\boldsymbol{\theta}, \boldsymbol{\delta}}}}{b_\rho}}} \\ 
& = \E \exp \pp{-\sum_{\boldsymbol{\delta} \in \Lambda_d} \frac{M_\rho^{\boldsymbol{\delta}}(\vvA)}{b_\rho^{\alpha}} \abs{\ip{\boldsymbol{\theta}, \boldsymbol{\delta}}}^{\alpha} \sigma_{\varepsilon}^{\alpha} \frac{\log \phi \pp{\frac{\abs{\ip{\boldsymbol{\theta}, \boldsymbol{\delta}}}}{b_\rho}}}{-\sigma_{\varepsilon}^{\alpha} \abs{\frac{\ip{\boldsymbol{\theta}, \boldsymbol{\delta}}}{b_\rho}}^{\alpha}}}.
\end{align*}
Recall that the assumption \eqref{eq:ch.f.epsilon} implies that
$\log \phi\pp{\theta} \sim \phi\pp{\theta}-1 \sim -\sigma_{\varepsilon}^{\alpha} \abs{\theta}^{\alpha} \ \mbox{as} \ \theta \to 0$.
Then, by \eqref{eq:limit_Mn} and the dominated convergence theorem,
\[
\lim_{\rho\to\infty}\E \exp\pp{i\sum_{j=1}^{d} \theta_j \frac{U_\rho(A_j)}{b_\rho}} = \exp \pp{-\sum_{\boldsymbol{\delta} \in \Lambda_d} \abs{\ip{\boldsymbol{\theta}, \boldsymbol{\delta}}}^{\alpha} \mathfrak m^{\boldsymbol{\delta}}(\vvA)},
\]
where the limit is the desired characteristic function $\E \exp  \spp{i \sum_{j=1}^{d} \theta_jY_{\alpha, \beta}(A_j)}$ (recall \eqref{eq:ch.f.}). This completes the proof.
\end{proof}

\subsection*{Acknowledgement}
The authors would like to thank an anonymous referee for careful reading of the paper. 
YW thanks Olivier Durieu and Ilya Molchanov for careful reading and inspiring discussions on an earlier draft of the paper.
ZF and YW's research were partially supported by Army Research Laboratory grant W911NF-17-1-0006. 
YW's research was in addition partially supported by NSA grant H98230-16-1-0322.

\bibliographystyle{apalike}
\bibliography{references,references18}

\end{document}